\newcommand{\1}{{\mathbf{1}}}
\def\Om{\Omega}
\def\b{\beta}
\def\s{\sigma}
\def\m{\mu}
\def\La{\Lambda}
\newtheorem{theorem}{Theorem}[section]
\newtheorem{lemma}{Lemma}[section]
\newtheorem{proposition}{Proposition}[section]
\newtheorem{remark}{Remark}
\title{System of recursive equations for the partition functions of 1D models}
\author{U.A. Rozikov\textsuperscript{1}\\
{\small \textsuperscript{1} Institute of Mathematics and
Information Technologies,\hfill{\ }}\\
\ \ {\small 29, Do'rmon Yo'li str., Tashkent, 100125, Uzbekistan.}\\ \ \ \ \ \ {\small Email: rozikovu@yandex.ru \hfill {\ }}}
\date{}
\begin{document}

\maketitle

\begin{abstract}  In this note we consider several kind of
partition functions of one-dimensional models with nearest
- neighbor interactions $I_n, n\in \mathbf{Z}$ and  spin values $\pm 1$.
We derive systems of recursive equations for each kind of
such functions. These systems depend on parameters $I_n, n\in \mathbf{Z}$.
Under some conditions on the parameters
we describe solutions of the systems of recursive equations.

{\bf{Key words:}} One-dimension; configuration; partition function;
recursive equation.
\end{abstract}

\section{Introduction}

The properties of physical, biological and many other systems
can be described by differential and recursive equations; the
latter are also called discrete dynamical system (see e.g.
\cite{De}, \cite{GR}, \cite{Sh}). Also systems of non-linear, higher dimensional
recursive equations arise in solving many different problems
(see e.g. \cite{D'}, \cite{Kr}, \cite{Mu}, \cite{Roz},\cite{SR}, \cite{Ta}).
But theory of the systems of recursive equations is not developed enough.
So for each such a system one has to use a specific argument which is suitable
for soling the system.

In the paper we consider the Hamiltonian (energy)
\begin{equation}
H(\s)= \sum_{l=(x-1, x): x\in \mathbf{Z}}
I_x\1_{\s(x-1)\ne\s(x)}, \label{1}
\end{equation}
 where $\mathbf{Z}=\{...,-2,-1, 0,
1,2,...\}$,\ $\s$ is a function (configuration), $\s:\mathbf{Z}\to \{-1,1\}$, (the set of all 
configurations $\s$ is denoted by $\Om=\{-1,1\}^{\mathbf {Z}})$
and $ I_x\in R$ for any $x\in \mathbf{Z}$.

In statistical physics the Hamiltonian (\ref{1}) is called an one-dimensional (1D) model.

Let us consider a sequence $\La_n=[-n,n]$, $n=0,1,...$ and denote
$\La_n^c=\mathbf{Z}\setminus \La_n$. Consider a boundary condition
$\s^{(+)}_n=\s_{\La_n^c}=\{\s(x)=+ 1: x\in \La_n^c\}.$ The energy
$H_n^+(\s)$ of the configuration $\s$ in the presence of the
boundary condition $\s^{(+)}_n$ is expressed by the formula
\begin{equation}
H^+_n(\s)= \sum_{l=(x-1, x): x\in \La_n}
I_x\1_{\s(x-1)\ne\s(x)}+
I_{-n}\1_{\s(-n)\ne 1}+I_{n+1}\1_{\s(n)\ne 1}. \label{2}
\end{equation}

The Gibbs measure on $\Om_n=\{-1, 1\}^{\La_n}$ with respect to the
boundary condition $\s_n^{(+)}$ is defined by 
\begin{equation}
\m^+_{n,\b}(\s)=Z^{-1}(n,\b,+)
\exp(-\b H_n^+(\s)), \label{3}
\end{equation}
where $\b=T^{-1}$, $T>0-$ temperature and
 $ Z(n, \b, +)$ is the normalizing factor 
 (statistical sum or partition function):
 \begin{equation}
 Z(n, \b, +)=\sum_{\varphi\in \Om_n}\exp(-\b H_n^+(\varphi)).\label{4}
 \end{equation}
 Note that the probability  
 (with respect to measure $\m^+_{n,\b}$) of a subset $\Om'_n$ of $\Om_n$
 is defined by 
\begin{equation}
 \m^+_{n,\b}(\Om'_n)=Z^{-1}(n,\b,+)
\sum_{\psi\in \Om'_n}\exp(-\b H_n^+(\psi))={Z'(n,\b,+)\over Z(n,\b,+)}, \label{5}
\end{equation}
where $Z'(n,\b,+)$ is called a "crystal" partition function:
\begin{equation}
Z'(n,\b,+)=\sum_{\psi\in \Om'_n}\exp(-\b H_n^+(\psi)).\label{6}
\end{equation}
 
So to define the Gibbs measure and probability of an event of the system 
one has to compute the partition functions.
If $\mu^+_\b=\lim_{n\to \infty}\m^+_{n,\b}$ exists then it is called a 
limit Gibbs measure.
A limit Gibbs measure for a given type of interaction (energy) 
may fail to be unique this means that the physical system with 
this interaction can take several distinct equilibria i.e there is phase transition.
  
Note that (see \cite{Ge}, p.95) for the model (\ref{1}) on $N=\{1,2,...\}$ it was
shown that there occurs a phase transition iff $\sum_{n\geq
1}e^{-2I_n}<\infty.$ In \cite{Ro} using a contour argument it has been
proven that for that model (\ref{1}) the phase transition occurs if
$I_n+I_{n+k}>k$ for any $n\in \mathbf{Z}, \ k\in N.$

In this paper we consider some (crystal) partition functions of the model
and give the system of recursive equations for the functions. Under some 
conditions on parameters of the model we describe their solutions.
 
\section{Partition function of "+" and "$\pm$" -boundary conditions}

Consider two type of partition functions:
\begin{equation}
Z^+_n=\sum_{\s_n\in \Om_n}\exp\{-\b H^+_n(\s_n)\}, \label{7}
\end{equation}
\begin{equation}
Z^{\pm}_n=\sum_{\s_n\in \Om_n}\exp\{-\b H^{\pm}_n(\s_n)\}, \label{8}
\end{equation}
where $H^+_n$ is defined by (\ref{2}) and
\begin{equation}
H^{\pm}_n(\s_n)=H^+_n(\s_n)+I_{-n}\s(-n). \label{9}
\end{equation}
In this paper for the simplicity assume
\begin{equation}
I_n=I_{-n+1},\ \ \mbox{for any} \ \ n\in \mathbf{Z}. \label{10}
\end{equation}
\begin{proposition}\label{p1}
If condition (\ref{10}) is satisfied then the
partition functions (\ref{7}) and (\ref{8}) have the form
\begin{equation} \begin{array}{llll}
Z_n^+={1\over 2}\bigg(\prod_{i=0}^n(1+e^{-\b I_{i+1}})^2+\prod_{i=0}^n(1-e^{-\b I_{i+1}})^2\bigg),\\
Z_n^{\pm}={1\over 2}\bigg(\prod_{i=0}^n(1+e^{-\b I_{i+1}})^2-\prod_{i=0}^n(1-e^{-\b I_{i+1}})^2\bigg).\\
\end{array}\label{11}
\end{equation}
\end{proposition}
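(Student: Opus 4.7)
The idea is to switch from the spin variables $\s(x)$ to the bond indicators $b_x=\1_{\s(x-1)\neq\s(x)}$. For $Z^+_n$, one extends the configuration by setting $\s(-n-1)=\s(n+1)=1$; then the two explicit boundary terms in (\ref{2}) are exactly the bond contributions at $x=-n$ and $x=n+1$, and the Hamiltonian becomes the linear form
\[
H^+_n(\s)=\sum_{x=-n}^{n+1}I_x\,b_x.
\]
For $Z^{\pm}_n$, I would instead set $\s(-n-1)=-1$; the added term $I_{-n}\s(-n)$ in (\ref{9}) converts $I_{-n}\1_{\s(-n)\neq 1}$ into $I_{-n}\1_{\s(-n)\neq -1}$, so the same linear expression $H^{\pm}_n(\s)=\sum_{x=-n}^{n+1}I_x\,b_x$ holds with $b_x$ computed from the new boundary.

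The map $\s\mapsto(b_{-n},\ldots,b_{n+1})$ is a bijection from $\Om_n$ onto the set of $\{0,1\}$-sequences of length $2n+2$ subject to the parity condition dictated by the two fixed boundary spins: $\sum_x b_x$ must be \emph{even} when both equal $+1$ (the ``$+$'' case) and \emph{odd} when they disagree (the ``$\pm$'' case), since the number of sign changes along the path has the parity of the endpoint difference.

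Next, I would invoke the elementary identity (obtained by expanding the product)
\[
\prod_{j=-n}^{n+1}(1\pm e^{-\b I_j})=\sum_{b\in\{0,1\}^{2n+2}}(\pm 1)^{|b|}\prod_{j=-n}^{n+1}e^{-\b I_j\,b_j}.
\]
Taking half the sum and half the difference of the ``$+$'' and ``$-$'' versions isolates the even- and odd-parity contributions, giving
\[
Z^+_n={1\over 2}\left[\prod_{j=-n}^{n+1}(1+e^{-\b I_j})+\prod_{j=-n}^{n+1}(1-e^{-\b I_j})\right]
\]
and the analogous formula for $Z^{\pm}_n$ with a minus sign between the two products.

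Finally, the symmetry (\ref{10}) reads $I_{-j}=I_{j+1}$ for $0\le j\le n$, so the $2n+2$ indices $\{-n,\ldots,n+1\}$ split into $n+1$ pairs each contributing two equal factors $1\pm e^{-\b I_{i+1}}$ for some $i\in\{0,\ldots,n\}$. Each product therefore collapses to a perfect square and one recovers exactly (\ref{11}). The only real obstacle is bookkeeping --- carefully verifying the boundary parity claim and the pairing of indices --- but no substantive difficulty arises once these are in place.
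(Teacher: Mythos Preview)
Your argument is correct, but it is not the proof the paper gives. The paper establishes (\ref{11}) recursively: under (\ref{10}) it first notes $Z^-_n=Z^+_n$ and $Z^{\mp}_n=Z^{\pm}_n$, then derives the coupled two-term recursion
\[
Z^+_n=(1+e^{-2\b I_{n+1}})Z^+_{n-1}+2e^{-\b I_{n+1}}Z^{\pm}_{n-1},\qquad
Z^{\pm}_n=(1+e^{-2\b I_{n+1}})Z^{\pm}_{n-1}+2e^{-\b I_{n+1}}Z^{+}_{n-1},
\]
diagonalises it via $X_n=Z^+_n-Z^{\pm}_n$, $Y_n=Z^+_n+Z^{\pm}_n$ to obtain $X_n=(1-e^{-\b I_{n+1}})^2X_{n-1}$ and $Y_n=(1+e^{-\b I_{n+1}})^2Y_{n-1}$, and reads off (\ref{11}) from the initial values $X_0=(1-e^{-\b I_1})^2$, $Y_0=(1+e^{-\b I_1})^2$. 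Your route is a direct combinatorial computation: rewriting $H$ in bond variables, using the parity constraint fixed by the boundary spins, and picking out the even/odd parts of $\prod_j(1\pm e^{-\b I_j})$ before invoking (\ref{10}) only at the end to collapse the product to a square. The recursive proof is in keeping with the paper's theme and explains the squared factors as the effect of adding a site symmetrically on each side; your proof is shorter, does not need to set up and decouple a recursion, and as a byproduct yields the non-symmetric formulas
\[
Z^+_n=\tfrac12\Big[\prod_{j=-n}^{n+1}(1+e^{-\b I_j})+\prod_{j=-n}^{n+1}(1-e^{-\b I_j})\Big],\qquad
Z^{\pm}_n=\tfrac12\Big[\prod_{j=-n}^{n+1}(1+e^{-\b I_j})-\prod_{j=-n}^{n+1}(1-e^{-\b I_j})\Big]
\]
valid without assuming (\ref{10}).
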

\begin{proof}
Under the condition (\ref{10}) we get $Z^-_n=Z^+_n$ and $Z^{\pm}_n=Z^{\mp}_n$.
Now from (\ref{7}), (\ref{8}) we obtain the following system of recursive equations
\begin{equation}
\begin{array}{llll}
Z^+_n=(1+e^{-2\b I_{n+1}})Z^+_{n-1}+2e^{-\b
I_{n+1}}Z^{\pm}_{n-1},\\
 Z^{\pm}_n=(1+e^{-2\b
I_{n+1}})Z^{\pm}_{n-1}+2e^{-\b I_{n+1}}Z^+_{n-1}
\end{array}\label{12}
\end{equation}
Putting $X_n=Z^+_n-Z^{\pm}_n$ and $Y_n=Z^+_n+Z^{\pm}_n$ from (\ref{12})
we get
\begin{equation}
\begin{array}{llll}
X_n=(1-e^{-\b I_{n+1}})^2X_{n-1},\\
Y_n=(1+e^{-\b I_{n+1}})^2Y_{n-1}.\\
\end{array}\label{13}
\end{equation}

The equalities $X_0=Z^+_0-Z^{\pm}_0=(1-e^{-\b I_1})^2, \ \
Y_0=(1+e^{-\b I_1})^2$  with (\ref{13}) imply
$$
X_n=\prod_{i=0}^n(1-e^{-\b I_{i+1}})^2, \ \
Y_n=\prod_{i=0}^n(1+e^{-\b I_{i+1}})^2.$$ Hence we get (\ref{11}).
\end{proof}

For example, in a case of the usual Ising model i.e. $I_m=I,$
$\forall m\in \mathbf{Z}$ from (\ref{11}) denoting $\tau=\exp(-\b I)$ we get
$$\begin{array}{llll}
Z_n^+={1\over 2}\bigg((1+\tau)^{2(n+1)}+(1-\tau)^{2(n+1)}\bigg),\\
Z_n^{\pm}={1\over 2}\bigg((1+\tau)^{2(n+1)}-(1-\tau)^{2(n+1)}\bigg).\\
\end{array}$$ Using these equalities (for usual Ising model) it is easy to see that
$$ {Z_n^+\over Z_n^{\pm}}\to 1, \ \ {\rm if} \ \ n\to\infty .$$
This means that for the Ising model the partition functions 
$Z_n^+$ and $Z^{\pm}_n$ are asymptotically equal. This gives
in fact uniqueness of limit Gibbs measure for the 1D Ising model. 
Such an asymptotical equality is true if $I_m$ is 
a periodic function of $m$ i.e $I_{m+p}=I_m$ for 
some $p\geq 1$ and all $m\in N$.

\section{Crystal partition functions}

In this section we are going to
describe the crystal partition functions.

Denote $\Om_{m,n}=\{-1,1\}^{[m,n]},$ where $[m,n]=\{m, m+1,...,
n\},\ m,n \in \mathbf{Z}, \ n\geq m.$ Put 
$$N_\varepsilon(\s)=|\{x\in [m,n]:
\s(x)=\varepsilon\}|, \ \varepsilon=\pm 1,$$
where $|S|$ is the cardinal  of the set $S$.
For $r=0,1,..., n-m+1$ consider the following 
crystal partition functions:
\begin{equation}
Z^{\varepsilon, r}_{m,n}=\sum_{\s\in\Om_{m,n}:N_{-\varepsilon}(\s)=r}e^{-\b
H^\varepsilon(\s)}, \ \ \varepsilon =-,+ \label{14}
\end{equation}
\begin{equation}
Z^{\pm, r}_{m,n}=\sum_{\s\in\Om_{m,n}:N_+(\s)=r}e^{-\b
H^\pm(\s)}. \label{15}
\end{equation}

 Note that $Z^{-,r}_{m,n}=Z^{+,r}_{m,n}$. Denoting $X^r_{m,n}=Z^{-,r}_{m,n}$
 and $Y^r_{m,n}=Z^{\pm, r}_{m,n}$ from (\ref{14}) and (\ref{15}) one easily 
 gets the following system of (multi-variable) recursive equations 
\begin{equation}
\begin{array}{llll}
X^r_{m,n}=X^{r-1}_{m,n-1}+e^{-\b
I_{n+1}}Y^r_{m, n-1},\\
 Y^r_{m,n}=Y^r_{m,n-1}+e^{-\b I_{n+1}}X^{r-1}_{m, n-1},
\end{array}\label{16}
\end{equation}
where $r=0,1,...,n-m+1$, $m,n\in \mathbf{Z}$, $n\geq m$.

The system (\ref{16}) can be reduced to a recursive equation 
with respect to $X^r_{m,n}$. Indeed, from the first equation of (\ref{16})
we get 
\begin{equation}
Y^r_{m, n-1}=
e^{\b I_{n+1}}\left(X^r_{m,n}-X^{r-1}_{m,n-1}\right).\label{17}
\end{equation}
Now from the second equation of (\ref{16}) using (\ref{17})
we get  
\begin{equation}
X^r_{m,n}=X^{r-1}_{m,n-1}+e^{-\b
(I_{n+1}-I_n)}(X^r_{m,n-1}-X^{r-1}_{m,n-2})+e^{-\b(I_n+I_{n+1})}X^{r-1}_{m,n-2},
\label{18}
\end{equation}
 where $r=1,2,...,n-m+1,\ n\geq m$ and
$$ X^0_{m,m}=1, \ \ X^1_{m,m}=e^{-\b (I_m+I_{m+1})}, \ \
X^0_{m,m+1}=1,$$
$$X^1_{m,m+1}=e^{-\b (I_m+I_{m+1})}+e^{-\b(I_{m+1}+I_{m+2})},
\ \ X^2_{m,m+1}=e^{-\b (I_m+I_{m+2})},\ m\in \mathbf{Z}.$$

Iterating (\ref{18}) one can obtain an expression for $X^r_{m,n}.$
Then using (\ref{17}) one can find $Y^r_{m,n}.$ But these
expressions would be in a very bulky form.

Now we shall illustrate such an expression for the Ising model i.e.
$I_m\equiv I,\ m\in \mathbf{Z}$. In this case the recurrence equation (\ref{18})
becomes more simple
\begin{equation}
X^r_n=X^{r-1}_{n-1}+X^r_{n-1}+(\chi-1)X^{r-1}_{n-2},\label{19}
\end{equation}
where $X^r_n=X^r_{m,n_1}$ with $n_1-m=n$, $\chi=e^{-2\b I}.$

 It is easy to see that for $r=0,1,2,3,4$ the solutions are
$$X^0_n=1, \ \ X^1_n=n\chi, \ \ X^2_n=(n-1)\chi+{(n-2)(n-1)\over
2!}\chi^2,$$
$$X^3_n=(n-2)\chi+(n-2)(n-3)\chi^2+{(n-2)(n-3)(n-4)\over
3!}\chi^3,$$
$$X^4_n=(n-3)\chi+{3(n-3)(n-4)\over 2}\chi^2+{(n-3)(n-4)(n-5)\over
2}\chi^3+$$
$${(n-3)(n-4)(n-5)(n-6)\over 4!}\chi^4,$$ here we used
the following formulas $$\sum_{j=1}^nj^2={1\over 6}(2n^3+3n^2+n),\ \
\sum_{j=1}^nj^3={1\over 4}(n^4+2n^3+n^2).$$

Note that $X^r_n$ has a form
$$X^r_n=a_{1,n}^r\chi+a_{2,n}^r\chi^2+...+a_{r,n}^r\chi^r.$$
 For the coefficients $a_{k,n}^r$, $0\leq k\leq r\leq n$, using (\ref{19})
we obtain the following system of recursive equations
\begin{equation}
\begin{array}{llll}
a^r_{1,n}=a^r_{1,n-1}+a^{r-1}_{1,n-1}-a^{r-1}_{1,n-2},\\[2mm]
a^r_{k,n}=a^r_{k,n-1}+a^{r-1}_{k,n-1}+a^{r-1}_{k-1,n-2}-a^{r-1}_{k,n-2},\
\
k=2,3,...,r-1,\\[2mm]
a^r_{r,n}=a^r_{r,n-1}+a^{r-1}_{r-1,n-2},\\
\end{array}\label{20}
\end{equation}
The following lemma gives solution to (\ref{20}) 
\begin{lemma}
Solution of the system of recursive equations (\ref{20}) is
\begin{equation}
a^r_{k,n}={n-r+1 \choose k}{r-1\choose k-1},
\ \ 0\leq k\leq r\leq n. \label{21}
\end{equation}
\end{lemma}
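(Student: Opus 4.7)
The plan is to prove \eqref{21} by induction on $n$ (holding the relation $r\le n$ in mind and adopting the convention $\binom{a}{b}=0$ whenever $b<0$ or $a<b$), using the ansatz $a^r_{k,n}=\binom{n-r+1}{k}\binom{r-1}{k-1}$ and verifying that it satisfies each of the three recurrences in \eqref{20}. Since the stated formula involves only the combination $n-r$ in the first factor, Pascal's identity in the form $\binom{n-r+1}{k}=\binom{n-r}{k}+\binom{n-r}{k-1}$ will do most of the work.

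First I would dispose of the base cases: check directly that the formula reproduces $X^0_n=1$, $X^1_n=n\chi$, $X^2_n$, $X^3_n$, and $X^4_n$ computed in the paper just before \eqref{20}. This covers initial conditions for a sufficient range of $(r,n)$-pairs so that the recurrences can be started.

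For the inductive step, I would substitute the ansatz into the general middle recurrence
\[
a^r_{k,n}=a^r_{k,n-1}+a^{r-1}_{k,n-1}+a^{r-1}_{k-1,n-2}-a^{r-1}_{k,n-2},
\]
giving a right-hand side
\[
\tbinom{n-r}{k}\tbinom{r-1}{k-1}+\tbinom{n-r+1}{k}\tbinom{r-2}{k-1}+\tbinom{n-r}{k-1}\tbinom{r-2}{k-2}-\tbinom{n-r}{k}\tbinom{r-2}{k-1}.
\]
Applying Pascal to $\binom{r-1}{k-1}=\binom{r-2}{k-1}+\binom{r-2}{k-2}$ in the first term and to $\binom{n-r+1}{k}=\binom{n-r}{k}+\binom{n-r}{k-1}$ in the second term, the four terms collapse (the $\binom{n-r}{k}\binom{r-2}{k-1}$ contributions cancel against the subtracted term) to
\[
\bigl[\tbinom{n-r}{k}+\tbinom{n-r}{k-1}\bigr]\bigl[\tbinom{r-2}{k-1}+\tbinom{r-2}{k-2}\bigr]=\tbinom{n-r+1}{k}\tbinom{r-1}{k-1},
\]
which is exactly $a^r_{k,n}$. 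The boundary recurrences are easier: for $k=1$ the ansatz gives $a^r_{1,n-1}+a^{r-1}_{1,n-1}-a^{r-1}_{1,n-2}=(n-r)+(n-r+1)-(n-r)=n-r+1=a^r_{1,n}$, and for $k=r$ one gets $a^r_{r,n-1}+a^{r-1}_{r-1,n-2}=\binom{n-r}{r}+\binom{n-r}{r-1}=\binom{n-r+1}{r}=a^r_{r,n}$ by a single application of Pascal.

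I expect the main obstacle to be purely bookkeeping: the indices $n-1$, $n-2$, $r$, $r-1$, $k$, $k-1$, $k-2$ appear in many combinations, and one has to be careful about the degenerate boundary values (in particular about using the convention $\binom{a}{b}=0$ for $a<b$ so that the $k=1$ and $k=r$ recurrences can, in fact, be viewed as specializations of the middle recurrence and the argument unifies). No deeper identity than Pascal's rule is needed.
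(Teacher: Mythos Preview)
Your proposal is correct and follows essentially the same strategy as the paper: verify by induction that the closed form \eqref{21} satisfies each line of \eqref{20}, with Pascal's rule doing all the algebraic work. The only cosmetic difference is the induction variable---the paper inducts on the sum $m=n+k+r$ (so that every term on the right of \eqref{20} has strictly smaller index sum), while you induct on $n$; both orderings are valid since every right-hand term already has a strictly smaller $n$, and your write-up is in fact more explicit than the paper's about the Pascal manipulations that make the identity close.
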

\begin{proof} We shall use mathematical induction (cf. with \cite{Ni}
pages 148-150). Let $A_m$ denote all cases of (\ref{21}) with $n+k+r=m.$
Formulas given above for $X^r_n,\ \ r=0,1,2,3,4$ show that the
formula (\ref{21}) is true for small values of $m$. Assuming that $A_m$
holds, we are to prove $A_{m+1}$ that is, equation (\ref{20}), for any
integers $n, r$ and $k$ whose sum is $m+1.$ Since RHS of equation
(\ref{20}) contains terms with $n+r+k\leq m$  using the assumption of
the induction for each term of RHS of (\ref{20}) we get (\ref{21}).
\end{proof}

Thus the solution of (\ref{19}) is given by
\begin{equation}
X^r_n=\sum_{k=1}^r{n-r+1 \choose k} {r-1\choose
k-1}\chi^k \label{22}
\end{equation}
\begin{remark}
Note that for $\chi=1$ (i.e. there is no
interaction) the solution of (\ref{19}) is $X^r_n={n\choose r}.$ Using
(\ref{22}) for $\chi=1$ we obtain the following property of binomial
coefficients
\begin{equation}
{n\choose r}=\sum_{k=1}^r{n-r+1 \choose k}{r-1\choose
k-1}. \label{23}
\end{equation}
 This identity is known as the convolution
identity of Vandermonde.
\end{remark}

Since interaction (parameter $I$) of the 1D Ising model is 
translation-invariant (does not depend on the points of $\mathbf{Z}$), 
the unknown functions  $X^r_{m,n}, \ Y^r_{m,n}$ of the system (\ref{16}) depend 
on $n-m$ and $r$ only (see (\ref{19})), consequently, 
instate of $n-m$ we can write $n$. 
Summarizing the results for the Ising model we have
\begin{theorem}
For the Ising model the solution of the system of recursive 
equations (\ref{16}) is
$$X^r_n=\sum_{k=1}^r{n-r+1 \choose k} {r-1\choose
k-1}\chi^k,$$
$$Y^r_n={1\over \sqrt{\chi}}\left(X^r_{n+1}-X^{r-1}_n\right),$$
where $n$ stands for $n-m$. 
\end{theorem}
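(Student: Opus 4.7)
The plan is to observe that the theorem has essentially been reduced by the preceding development to two short verifications, one for each displayed formula.

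For the formula for $X^r_n$, I would simply invoke the chain of reductions already in place. The translation invariance of the Ising interaction collapses the two-index recursion (\ref{18}) to the one-index recursion (\ref{19}) with $X^r_n = X^r_{m,n+m}$, so that solving (\ref{19}) suffices. The expansion $X^r_n = \sum_k a^r_{k,n}\chi^k$ and the decomposition of the recursion into the system (\ref{20}) for the coefficients $a^r_{k,n}$ are given. Lemma 3.1 then supplies the closed form $a^r_{k,n}=\binom{n-r+1}{k}\binom{r-1}{k-1}$, and substitution gives the stated formula (\ref{22}).

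For the formula for $Y^r_n$, the main observation is that for the Ising model $I_m \equiv I$, so $e^{\beta I_{n+1}} = e^{\beta I} = \chi^{-1/2} = 1/\sqrt{\chi}$. Specializing the general identity (\ref{17}) to this constant-$I$ case and dropping the redundant index $m$ yields
\[
Y^r_{n-1} \;=\; \frac{1}{\sqrt{\chi}}\bigl(X^r_n - X^{r-1}_{n-1}\bigr).
\]
A shift of the index $n\mapsto n+1$ produces exactly the claimed formula $Y^r_n = \frac{1}{\sqrt\chi}(X^r_{n+1}-X^{r-1}_n)$.

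I expect no real obstacle: the only point that deserves explicit comment is the justification for passing from $X^r_{m,n}$ and $Y^r_{m,n}$ to one-index functions $X^r_n$, $Y^r_n$. This rests on translation invariance of the interaction: since equation (\ref{16}) depends on $n$ only through $I_{n+1}$, and $I_{n+1}$ is constant in $n$ for the Ising model, every $X^r_{m,n}$ (and $Y^r_{m,n}$) depends on $(m,n)$ only through $n-m$. I would mention this briefly before invoking Lemma 3.1 and the identity (\ref{17}).
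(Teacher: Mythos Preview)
Your proposal is correct and matches the paper's approach exactly: the theorem is presented there as a summary of the preceding development, and your write-up simply makes explicit the chain (\ref{18})$\to$(\ref{19})$\to$(\ref{20})$\to$Lemma~3.1$\to$(\ref{22}) for $X^r_n$ and the specialization of (\ref{17}) with $e^{\beta I}=\chi^{-1/2}$ (plus an index shift) for $Y^r_n$. The translation-invariance remark you flag is also precisely what the paper notes just before stating the theorem.
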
   

{\bf Acknowledgments.} I thank the Abdus Salam International Center for
Theoretical Physics (ICTP), Trieste, Italy for providing financial
support of my visit to ICTP (February-April 2009).

\end{document}